\theoremstyle{definition}
\newtheorem*{ack}{Acknowledgements}
\author{Michael Hartz}
\title{von Neumann's inequality for commuting weighted shifts}
\address{Department of Pure Mathematics, University of Waterloo, Waterloo, ON N2L 3G1, Canada}
\email{mphartz@uwaterloo.ca}
\thanks{The author is partially supported by an Ontario Trillium Scholarship.}
\subjclass[2010]{Primary 47B37; Secondary 47A20, 47A13}
\keywords{Multivariable shift, commuting contractions, von Neumann's inequality, unitary dilation}
\renewcommand{\MR}[1]{}
\begin{document}

\begin{abstract}
  We show that every multivariable contractive weighted shift dilates to a tuple of commuting
  unitaries, and hence satisfies von Neumann's inequality.
  This answers a question of Lubin and Shields.
  We also exhibit a closely related $3$-tuple of commuting contractions, similar to Parrott's example,
  which does not dilate to a $3$-tuple of commuting unitaries.
\end{abstract}

  \maketitle
  \section{Introduction}
  Von Neumann's inequality states that
  \begin{equation*}
    ||p(T)|| \le \sup \{|p(z)|: z \in \ol{\bD} \}
  \end{equation*}
  holds for every contraction $T$ on a Hilbert space and every polynomial $p \in \bC[z]$,
  where $\bD$ denotes the open unit disc in $\bC$ \cite{Neumann51}. This inequality can be deduced from Sz.-Nagy's dilation theorem,
  according to which every contraction $T$ on a Hilbert space admits a unitary (power) dilation \cite{Sz.-Nagy53}.
  And\^o's theorem shows that any pair $(T_1,T_2)$ of commuting contractions dilates to a
  pair of commuting unitaries \cite{Ando63}. As a consequence, we obtain a two variable von Neumann inequality:
  \begin{equation*}
    ||p(T_1,T_2)|| \le \sup \{|p(z_1,z_2)|: (z_1,z_2) \in \ol{\bD}^2 \}
  \end{equation*}
  for every polynomial $p \in \bC[z_1,z_2]$.
  The situation for three or more commuting contractions is quite different. Parrott \cite{Parrott70} gave
  an example of three commuting contractions satisfying von Neumann's inequality
  which do not dilate to commuting unitaries. Kaijser-Varopoulos \cite{Varopoulos74}
  and Crabb-Davie \cite{CD75} exhibited three commuting contractions which do not satisfy the
  three variable version of von Neumann's inequality.
  More details about this topic can be found in Chapter 5 of the book \cite{Paulsen02}.

  In 1974, Shields \cite[Question 36]{Shields74} asked if von Neumann's inequality holds for a particularly tractable
  class of commuting contractions, namely multivariable weighted shifts. He attributes this question to Lubin. This problem
  is also explicitly mentioned in the proof of Theorem 22 in \cite{JL79}.
  Multivariable weighted shifts can be defined as follows.
  Let $(\beta_I)_{I \in \bN^d}$ be a collection of strictly positive numbers with $\beta_0 = 1$ such that
  for $j = 1,\ldots,d$, the set
  \begin{equation*}
    \{\beta_{I + \varepsilon_j} / \beta_I : I \in \bN^d\}
  \end{equation*}
  is bounded, where $\varepsilon_j \in \bN^d$ is the tuple whose $j$-coordinate is $1$
  and whose other coordinates are $0$. Define a space of formal power series
  \begin{equation*}
    H^2(\beta) = \Big\{ f(z) = \sum_{I \in \bN^d} a_I z^I : ||f||^2 = \sum_{I \in \bN^d} |a_I|^2 \beta_I^2 < \infty \Big\}
  \end{equation*}
  and for $j = 1,\ldots,d$, let $M_{z_j}$ be the unique bounded linear operator on $H^2(\beta)$ such that
  \begin{equation*}
    M_{z_j} z^I = z^{I + \varepsilon_j} \quad \text{ for all } I \in \bN^d.
  \end{equation*}
  Then the tuple $(M_{z_1},\ldots,M_{z_d})$ is called a $d$-variable weighted shift. More details
  about multivariable weighted shifts can be found in Section \ref{sec:weighted_multshift}.
  
  The purpose of this note is to provide a positive answer to the question of Lubin and Shields.

  \begin{thm}
    \label{thm:main}
    Let $T = (T_1,\ldots,T_d)$ be a $d$-variable weighted shift and assume that each $T_j$ is a contraction.
    Then $T$ dilates to a $d$-tuple of commuting unitaries. In particular,
    $T$ satisfies von Neumann's inequality, that is,
    \begin{equation*}
      ||p(T)|| \le \sup \{ |p(z)|: z \in \ol{\bD}^d\}
    \end{equation*}
    for all $p \in \bC[z_1,\ldots,z_d]$.
  \end{thm}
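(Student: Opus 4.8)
The plan is to build a simultaneous \emph{isometric} dilation of $T$ and then to pass to commuting unitaries by a soft semigroup argument. The key reduction I would record first is this: suppose the commuting contractions $T = (T_1,\dots,T_d)$ admit commuting isometries $V = (V_1,\dots,V_d)$ on a larger space $K \supseteq H = H^2(\beta)$ such that $H$ is invariant under each $V_j^*$ and $V_j^*|_H = T_j^*$. Then $K \ominus H$ is invariant under every $V_j$, so for $h \in H$ one has $P_H V_j(\,\cdot\,)$ compressing correctly, and an induction on $|I|$ gives $T^I = P_H V^I|_H$ for all $I \in \bN^d$. Now commuting isometries form an isometric representation of the abelian semigroup $\bN^d$, and such a representation always extends to a unitary representation of the group $\mathbb{Z}^d$ on a still larger space $\widetilde K \supseteq K$: one forms the inductive limit of $K$ along the directed system whose connecting maps are the $V_j$. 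The resulting commuting unitaries $U = (U_1,\dots,U_d)$ satisfy $U_j|_K = V_j$ with $K$ invariant under each $U_j$, whence $T^I = P_H U^I|_H$ for all $I$. Von Neumann's inequality then follows from the spectral theorem and the maximum modulus principle, since $\|p(T)\| = \|P_H\, p(U)|_H\| \le \|p(U)\| = \sup_{\mathbb{T}^d}|p| = \sup_{\ol{\bD}^d}|p|$.

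To set up the isometric dilation, write $\gamma_I = \beta_I^2$, so that each $T_j$ acts on the orthonormal basis $e_I = z^I/\beta_I$ as the weighted shift $T_j e_I = w_I^{(j)} e_{I+\varepsilon_j}$ with $w_I^{(j)} = \beta_{I+\varepsilon_j}/\beta_I \in (0,1]$; the contractivity hypothesis is precisely that $\gamma$ is non-increasing in each coordinate. I want to stress at the outset that the naive route through a \emph{regular} (Brehmer) dilation is unavailable. Writing $T_F = \prod_{j \in F} T_j$ and $\varepsilon_F = \sum_{j \in F} \varepsilon_j$, a direct computation shows that $\sum_{F \subseteq S} (-1)^{|F|} T_F^* T_F$ acts diagonally on $e_I$ with eigenvalue $\gamma_I^{-1}\big(\prod_{j \in S}(\mathrm{id} - \tau_j)\gamma\big)(I)$, where $\tau_j \gamma(I) = \gamma(I + \varepsilon_j)$. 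Positivity of all of these mixed differences is exactly the complete monotonicity of $\gamma$, which is strictly stronger than coordinatewise monotonicity. Hence the dilation I am after is genuinely of And\^o type (non-regular), and its construction must exploit the shift structure rather than any universal positivity criterion.

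For the construction itself I would dilate one coordinate at a time, in the spirit of And\^o's theorem, while keeping the entire object inside the category of weighted shifts. Regarding $T_d$ as a weighted shift in the last variable, with the remaining coordinates serving as a multiplicity space, I would take its minimal isometric (Sch\"affer) dilation $V_d$; since each power $T_d^{*n}$ annihilates $e_I$ once $n > I_d$, the operator $T_d$ is a $C_{\cdot 0}$ contraction, so this dilation is a pure shift, realized as an unweighted shift in the $d$-th direction on a space of larger multiplicity, and $H$ is co-invariant for it. The remaining shifts $T_1,\dots,T_{d-1}$ must then be extended to commuting contractions on the enlarged space that commute with $V_d$ and that again assemble into a weighted shift, so that the procedure can be iterated. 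Carrying this out over all $d$ coordinates produces the commuting isometries $V$ demanded by the reduction above.

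The main obstacle is exactly this simultaneous extension step: on the defect (tail) spaces introduced by the isometric dilation in one direction, one must prescribe the action of the other $d-1$ shifts so that all the extended operators are contractive and mutually commuting. For arbitrary commuting contractions this is where the iterated And\^o procedure collapses when $d \ge 3$, as Parrott's example demonstrates. The content of the theorem is that the weighted-shift structure furnishes a canonical extension: the defect at each grid point is controlled by the coordinatewise monotonicity of $\gamma$, and the extended operators can themselves be prescribed as weighted shifts on an enlarged grid, so that commutation and contractivity reduce to elementary relations among the weights. Verifying these relations — equivalently, checking that the commuting-square relations along the enlarged grid can be realized isometrically — is where I expect the genuine work of the proof to lie.
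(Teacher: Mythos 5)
Your framing reductions are fine: commuting isometric dilations upgrade to commuting unitary dilations by It\^o's inductive-limit construction, and your observation that the regular (Brehmer) route is unavailable is correct and is even echoed by the paper, which notes that already for $d=2$ a contractive weighted shift can fail the relevant positivity condition. But the proposal has a genuine gap, and you have located it yourself: the entire content of the theorem is the ``simultaneous extension step'' that your last paragraph describes and then leaves unverified. After dilating $T_d$ to an isometry $V_d$, you assert that $T_1,\ldots,T_{d-1}$ ``must then be extended to commuting contractions on the enlarged space that commute with $V_d$ and that again assemble into a weighted shift,'' but no construction of these extensions is given and no argument is offered that they exist. For $d \ge 3$ there is no general mechanism producing them --- iterating And\^o is exactly what fails, as you note --- so this step is not a verification to be deferred; it is the theorem. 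Worse, your heuristic that ``the defect at each grid point is controlled by the coordinatewise monotonicity of $\gamma$'' cannot by itself suffice: Section 5 of the paper exhibits a contractive $3$-variable weighted shift (with some weights equal to zero) whose weight diagram is coordinatewise monotone in the same sense, yet which does \emph{not} dilate to commuting unitaries. Any successful construction must therefore exploit strict positivity (equivalently, injectivity) of the weights in an essential way, and your sketch contains no mechanism that does so.

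For contrast, the paper's proof avoids dilation constructions almost entirely. It truncates to finite-dimensional shifts, regards the truncated shift as an \emph{analytic function} of its weight family $\mathbf w$, ranging over the compact set $X$ of commuting families with $0 < |w_{I,j}| \le 1$, and applies a maximum-modulus argument (Proposition \ref{prop:shilov_von_neumann}) to reduce matrix-valued von Neumann to weight families on the Shilov boundary of $X$. Lemma \ref{lem:Shilov_weights} shows that this boundary consists only of unimodular weight families --- the key point being that one can scale the ``scalable'' pairs of weights by a complex parameter while preserving the commutation relations, and it is precisely here that strict positivity of the weights enters. Finally, Lemma \ref{lem:weights_unit} identifies any unimodular-weight truncated shift with the all-ones shift, which is visibly a compression of the commuting unitaries $(M_{z_1},\ldots,M_{z_d})$ on $L^2(\bT^d)$. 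If you wish to salvage your approach, you would need to produce the commuting extension on the defect spaces explicitly; I know of no way to do this that is simpler than, or independent of, the boundary argument above.
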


  A proof of Theorem \ref{thm:main} will be given in Section \ref{sec:proof}. In fact, we will show that every contractive
  $d$-variable weighted shift satisfies the matrix version of von Neumann's inequality.

  It is important that the tuple $T$ in Theorem \ref{thm:main} is a multivariable weighted shift in the sense described above.
  Indeed, the three operators of the Crabb-Davie example \cite{CD75}, which do not satisfy von Neumann's inequality,
  commute and are weighted shifts individually
  (with some weights equal to zero), but they do not form a $3$-variable weighted shift. Furthermore, it is also
  possible to define multivariable weighted shifts with possibly zero weights, see Remark \ref{rem:zero_weights}.
  In Section \ref{sec:counterexample}, we will exhibit such a tuple of operators which does not dilate
  to a tuple of commuting unitaries. This example is similar to Parrott's example \cite{Parrott70}.

  Abstract considerations show that there exists a $d$-tuple of commuting contractions
  $(S_1,\ldots,S_d)$ on a Hilbert space such that
  \begin{equation*}
    ||p(T_1,\ldots,T_d)|| \le ||p(S_1,\ldots,S_d)||
  \end{equation*}
  holds for every $d$-tuple of commuting contractions $(T_1,\ldots,T_d)$ and every
  polynomial $p \in \bC[z_1,\ldots,z_d]$, and, in fact, for every matrix of polynomials $p$.
  This defines an operator algebra structure on $\bC[z_1,\ldots,z_d]$, which is called the universal operator
  algebra for $d$ commuting contractions (see \cite[Chapter 5]{Paulsen02}). It follows from Theorem \ref{thm:main} and from
  the failure of von Neumann's inequality for three commuting contractions that $S$ cannot be a $d$-variable weighted shift for $d \ge 3$.

  \begin{cor}
    Let $\rho: \bC[z_1,\ldots,z_d] \to \cB(\cH)$ be an isometric representation of the
    universal operator algebra for $d$ commuting contractions.
    If $d \ge 3$, then the tuple $(\rho(z_1),\ldots,\rho(z_d))$
    is not a $d$-variable weighted shift. \qed
  \end{cor}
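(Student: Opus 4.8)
The plan is to argue by contradiction, playing Theorem~\ref{thm:main} off against the known failure of von Neumann's inequality for $d \ge 3$ commuting contractions. Write $S = (S_1,\ldots,S_d)$ for the universal tuple, so that by definition $\|q(S)\| = \sup\{\|q(T)\| : (T_1,\ldots,T_d) \text{ a tuple of commuting contractions}\}$ for every polynomial $q$, and in particular $\|q(S)\| \ge \|q(T)\|$ for every such $T$. Since $\rho$ is isometric it preserves this norm, so
\[
  \|q(\rho(z_1),\ldots,\rho(z_d))\| = \|q(S)\| \qquad \text{for all } q \in \bC[z_1,\ldots,z_d].
\]
Because $\rho$ is an algebra homomorphism the operators $\rho(z_j)$ commute, and because $S$ is itself a tuple of commuting contractions we have $\|\rho(z_j)\| = \|S_j\| \le 1$; thus $(\rho(z_1),\ldots,\rho(z_d))$ is a $d$-tuple of commuting contractions.

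First I would record that the universal tuple $S$ fails von Neumann's inequality for every $d \ge 3$. For $d = 3$ this is precisely the Kaijser--Varopoulos / Crabb--Davie phenomenon \cite{Varopoulos74,CD75}: there is a scalar polynomial $q$ and three commuting contractions $(T_1,T_2,T_3)$ with $\|q(T_1,T_2,T_3)\| > \sup\{|q(z)| : z \in \ol{\bD}^3\}$, whence $\|q(S)\| \ge \|q(T_1,T_2,T_3)\| > \sup_{\ol{\bD}^3}|q|$. To pass to an arbitrary $d \ge 3$ I would regard the same $q$ as a polynomial in $z_1,z_2,z_3$ alone and test the universal tuple against $(T_1,T_2,T_3,0,\ldots,0)$, which is again a tuple of commuting contractions: here $q(T_1,T_2,T_3,0,\ldots,0) = q(T_1,T_2,T_3)$ and $\sup_{\ol{\bD}^d}|q| = \sup_{\ol{\bD}^3}|q|$ since $q$ ignores the extra variables. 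Hence there is a scalar polynomial $q$ with $\|q(S)\| > \sup_{\ol{\bD}^d}|q|$.

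Now suppose, for contradiction, that $(\rho(z_1),\ldots,\rho(z_d))$ were a $d$-variable weighted shift. As observed above each $\rho(z_j)$ is a contraction, so Theorem~\ref{thm:main} applies and yields $\|p(\rho(z_1),\ldots,\rho(z_d))\| \le \sup_{\ol{\bD}^d}|p|$ for every $p \in \bC[z_1,\ldots,z_d]$. Feeding in the polynomial $q$ from the previous paragraph and using that $\rho$ is isometric gives
\[
  \|q(S)\| = \|q(\rho(z_1),\ldots,\rho(z_d))\| \le \sup_{\ol{\bD}^d}|q|,
\]
which contradicts $\|q(S)\| > \sup_{\ol{\bD}^d}|q|$. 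Therefore $(\rho(z_1),\ldots,\rho(z_d))$ cannot be a $d$-variable weighted shift.

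This is a short contradiction argument with no genuine technical obstacle: all of the real work is hidden inside Theorem~\ref{thm:main}. The only points demanding any care are the verification that the image tuple consists of commuting contractions (so that Theorem~\ref{thm:main} becomes applicable under the assumption that it is a weighted shift) and the elementary promotion of the $3$-variable counterexample to arbitrary $d \ge 3$.
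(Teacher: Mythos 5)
Your proposal is correct and is essentially the argument the paper intends: the corollary is stated with a \qed precisely because it follows, as you argue, from Theorem~\ref{thm:main} combined with the failure of von Neumann's inequality for three (hence, by padding with zero operators, any $d \ge 3$) commuting contractions, transported through the isometric representation $\rho$. The only difference is that you spell out the routine details (the image tuple consists of commuting contractions, and the promotion from $d=3$ to general $d$) which the paper leaves implicit.
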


  This should be compared with the situation for commuting row contractions,
  that is, commuting tuples $(T_1,\ldots,T_d)$ satisfying $\sum_{j=1}^d T_j T_j^* \le 1$.
  In this case, the universal norm is the multiplier norm on the Drury-Arveson space, and the
  corresponding $d$-tuple $(M_{z_1},\ldots,M_{z_d})$ of row contractions is a $d$-variable weighted shift \cite{Arveson98,Drury78}.
  Indeed, the tuple $(M_{z_1},\ldots,M_{z_d})$ was first described as a weighted shift.

  The remainder of this note is organized as follows. In Section \ref{sec:von_neumann_shilov}, we provide a general method
  for establishing von Neumann's inequality for commuting contractions. In Section \ref{sec:weighted_multshift}, we recall
  the definition and some basic properties of multivariable weighted shifts. Section \ref{sec:proof} contains
  the proof of Theorem \ref{thm:main}. Finally, in Section \ref{sec:counterexample}, we exhibit an example which shows
  that Theorem \ref{thm:main} does not generalize to multivariable weighted shifts with possibly zero weights.

  \section{A general method for establishing von Neumann's inequality}
  \label{sec:von_neumann_shilov}

  Let $X \subset \bC^N$ be a compact set. We say that a function $f: X \to \bC$ is \emph{analytic}
  if it extends to an analytic function in an open neighbourhood of $X$.
  We denote by $\partial_0 X$ the Shilov boundary
  of the algebra of all analytic functions on $X$.
  Thus, $\partial_0 X$ is the
  smallest compact subset $K$ of $X$ such that
  \begin{equation*}
    \sup \{ |f(z)|: z \in X \} = \sup \{ |f(z)| : z \in K \}
  \end{equation*}
  holds for every analytic function $f$ on $X$.
  For simplicity, we call $\partial_0 X$ the \emph{Shilov boundary of $X$.}
  By the maximum modulus principle, $\partial_0 X$ is contained in the topological
  boundary $\partial X$, but it may be smaller.
  Much as in the scalar valued case,
  we say that a function $F = (F_1,\ldots,F_d): X \to \cB(\cH)^d$ is analytic if each
  $F_j$ extends to a $\cB(\cH)$-valued analytic function in an open neighbourhood of $X$.

  The next result is motivated by a proof of von Neumann's inequality for matrices due to Nelson \cite{Nelson61},
  see also \cite[Exercise 2.16]{Paulsen02} and \cite[Chapter 1]{Pisier01}.

  \begin{prop}
    \label{prop:shilov_von_neumann}
    Let $X \subset \bC^N$ be compact and suppose that $T: X \to \cB(\cH)^d$
    is an analytic function
    such that $T(z)$ is a $d$-tuple of commuting contractions for all $z \in X$. Then the following
    assertions are true:
    \begin{enumerate}[label=\normalfont{(\alph*)}]
      \item If the tuple $T(z)$ satisfies von Neumann's inequality for all $z \in \partial_0 X$,
        then $T(z)$ satisfies von Neumann's inequality for all $z \in X$.
      \item If the tuple $T(z)$ dilates to a tuple of commuting unitaries for all $z \in \partial_0 X$,
        then $T(z)$ dilates to a tuple of commuting unitaries for all $z \in X$.
    \end{enumerate}
  \end{prop}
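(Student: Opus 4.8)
The plan is to reduce both assertions to the maximum modulus principle for \emph{scalar} analytic functions, exploiting the defining property of the Shilov boundary. The bridge from operator norms to scalar functions is the passage to matrix coefficients $z \mapsto \langle \,\cdot\, \xi,\eta\rangle$.

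For part (a), I would fix a polynomial $p \in \bC[z_1,\ldots,z_d]$ and consider the map $F \colon X \to \cB(\cH)$, $F(z) = p(T(z))$, which is unambiguously defined because the operators $T_1(z),\ldots,T_d(z)$ commute. Since each $T_j$ extends to a $\cB(\cH)$-valued analytic function on a neighbourhood of $X$, and since sums and products of operator-valued analytic functions are again analytic, $F$ is analytic on $X$. Hence, for every fixed pair of vectors $\xi,\eta \in \cH$, the scalar function $z \mapsto \langle F(z)\xi,\eta\rangle$ is analytic on $X$, so the defining property of $\partial_0 X$ gives
\[
  \sup_{z \in X} |\langle F(z)\xi,\eta\rangle| = \sup_{z \in \partial_0 X} |\langle F(z)\xi,\eta\rangle|.
\]
Taking the supremum over all $\xi,\eta$ in the unit ball of $\cH$, using $\|A\| = \sup\{|\langle A\xi,\eta\rangle| : \|\xi\|,\|\eta\| \le 1\}$ and interchanging the two suprema, I obtain
\[
  \sup_{z \in X} \|p(T(z))\| = \sup_{z \in \partial_0 X} \|p(T(z))\|.
\]
If von Neumann's inequality holds for all $z \in \partial_0 X$, the right-hand side is at most $\sup\{|p(w)| : w \in \ol{\bD}^d\}$, and therefore so is $\|p(T(z))\|$ for every $z \in X$.

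For part (b), I would invoke the standard dictionary (see \cite[Chapter 5]{Paulsen02}) that a $d$-tuple of commuting contractions dilates to a $d$-tuple of commuting unitaries if and only if the associated unital homomorphism of the polydisc algebra $A(\bD^d)$ is completely contractive, that is, if and only if $\|P(T)\| \le \sup\{\|P(w)\| : w \in \ol{\bD}^d\}$ holds for every matrix-valued polynomial $P$. This reduces part (b) to the matrix version of the argument above: given an $n \times n$ matrix polynomial $P$, the map $z \mapsto P(T(z)) \in \cB(\cH^{(n)})$ is analytic, and repeating the coefficient-wise maximum modulus argument with $\xi,\eta$ ranging over the unit ball of $\cH^{(n)}$ yields
\[
  \sup_{z \in X} \|P(T(z))\| = \sup_{z \in \partial_0 X} \|P(T(z))\|.
\]
Since $T(z)$ dilates to commuting unitaries, and is therefore completely contractive, for every $z \in \partial_0 X$, the right-hand side is bounded by $\sup\{\|P(w)\| : w \in \ol{\bD}^d\}$. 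As $P$ was arbitrary, each $T(z)$ with $z \in X$ is completely contractive and hence dilates to commuting unitaries.

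The analyticity of the operator-valued maps and the interchange of suprema are routine. The one genuinely structural ingredient is the characterization in part (b) of commuting unitary dilations via complete contractivity: it is precisely this that forces the maximum modulus argument to be run at the level of all matrix amplifications simultaneously rather than on scalar polynomials alone. Once this is recognized, (b) is nothing more than (a) carried out on every matrix amplification at once, and I expect no further obstacle.
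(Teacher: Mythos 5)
Your proof is correct and follows essentially the same route as the paper: pass to matrix coefficients $z \mapsto \langle p(T(z))\xi,\eta\rangle$, apply the defining property of the Shilov boundary to these scalar analytic functions, and reduce part (b) to the matrix-valued von Neumann inequality via Arveson's characterization of commuting unitary dilations by complete contractivity. The only cosmetic difference is that the paper runs the argument once for matrix polynomials and obtains (a) by setting $n=1$, whereas you treat the scalar and matrix cases separately.
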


  \begin{proof}
    Let $p = (p_{i,j})_{i,j=1}^n$ be an $n \times n$ matrix of polynomials in $\bC[z_1,\ldots,z_d]$ and suppose
    that the inequality
    \begin{equation*}
      ||p(T(z))||_{\cB(\cH^n)} \le ||p||_\infty
    \end{equation*}
    holds for all $z \in \partial_0 X$, where
    \begin{align*}
      ||p||_\infty =
      \sup \{ ||p(w)||_{M_n} : w \in \ol{\bD}^d \}.
    \end{align*}
    Given $f,g \in \cH^n$ of norm $1$, observe that the scalar valued function
    \begin{equation*}
      X \to \bC, \quad z \mapsto \langle p(T(z)) f, g \rangle,
    \end{equation*}
    is analytic. By assumption, this function is bounded by $||p||_\infty$
    on $\partial_0 X$, and hence on $X$ by definition of $\partial_0 X$.
    Consequently, the inequality
    \begin{equation*}
      ||p(T(z))||_{\cB(\cH^n)} \le ||p||_\infty
    \end{equation*}
    holds for all $z \in X$.
    Part (a) now follows by taking $n=1$ above. Part (b) is a consequence of the general fact
    that a tuple of commuting contractions satisfies the matrix version of von Neumann's inequality if
    and only if it dilates to a tuple of commuting unitaries, which follows from Arveson's dilation theorem (see,
    for example, \cite[Corollary 7.7]{Paulsen02}, or \cite[Corollary 4.9]{Pisier01} for the explicit statement).
  \end{proof}

  \begin{rem}
    \begin{enumerate}[(a),wide]
      \item
    Proposition \ref{prop:shilov_von_neumann} and its proof remain valid in the following more general setting: Suppose
    that $\cA \subset C(X)$ is a uniform algebra with Shilov boundary $X_0 \subset X$. Let $T: X \to \cB(\cH)^d$
    be a function such that $T(z)$ is a $d$-tuple of commuting contractions for all $z \in X$ and such that
    for all $p \in \bC[z_1,\ldots,z_d]$ and all $f,g \in \cH$, the scalar valued function
    \begin{equation*}
      X \to \bC, \quad z \mapsto \langle p(T(z)) f ,g \rangle,
    \end{equation*}
    belongs to the algebra $\cA$. If $T(z)$ satisfies von Neumann's inequality (respectively dilates
    to a $d$-tuple of commuting unitaries) for all $z \in X_0$, then $T(z)$ satisfies von Neumann's inequality
    (respectively dilates to a $d$-tuple of commuting unitaries) for all $z \in X$.
  \item
    We can recover Nelson's proof of von Neumann's inequality from Proposition \ref{prop:shilov_von_neumann}
    in the following way.
    Suppose that $T \in M_n(\bC)$ is a contraction
    and let $T = U D V$ be a singular value decomposition of $T$, where $U,V \in M_n(\bC)$
    are unitary and $D$ is a diagonal matrix with entries in $[0,1]$. For $z \in \overline{\bD}^d$, define
    \begin{equation*}
      T(z) = U \operatorname{diag}(z_1,\ldots,z_n) V.
    \end{equation*}
    This defines an analytic map on $\ol{\bD}^d$.
    Moreover,
    $\partial_0 (\overline{\bD}^d) = \bT^d$, and $T(z)$ is unitary for $z \in \bT^d$.
    By Proposition \ref{prop:shilov_von_neumann}, it therefore suffices to establish
    von Neumann's inequality for unitary matrices, which in turn is an immediate consequence
    of the spectral theorem.
    \end{enumerate}

  \end{rem}

  To motivate the proof of Theorem \ref{thm:main}, we first deduce from Proposition \ref{prop:shilov_von_neumann}
  that
  single contractive weighted shifts satisfy von Neumann's inequality (of course, this also follows
  from the usual von Neumann's inequality for Hilbert space contractions).

  \begin{prop}
    \label{prop:single_variable}
    Let $T$ be a unilateral weighted shift which is a contraction. Then $T$ satisfies
    von Neumann's inequality.
  \end{prop}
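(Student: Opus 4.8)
The plan is to apply Proposition~\ref{prop:shilov_von_neumann}(a) with $X = \ol{\bD}$, whose Shilov boundary is $\partial_0 \ol{\bD} = \bT$, by exhibiting the given shift as the value at $\zeta = 0$ of an analytic family $\zeta \mapsto T(\zeta)$ of contractive weighted shifts whose members become, on $\bT$, unitarily equivalent to the unweighted unilateral shift. First I would normalise: writing $T e_n = w_n e_{n+1}$ for the weights of $T$ relative to an orthonormal basis $(e_n)_{n \in \bN}$, a diagonal unitary absorbing the phases shows that $T$ is unitarily equivalent to the weighted shift with weights $|w_n|$, so I may assume $w_n \in [0,1]$; here the contractivity of $T$ is exactly the condition $\sup_n w_n \le 1$.

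The key idea is to replace each scalar weight by a degree-one Blaschke factor, these being the one-variable analogue of the unimodular parameters in the Nelson-type argument recalled in the Remark. To keep the resulting family analytic on a genuine neighbourhood of $\ol{\bD}$, I would introduce a regularising parameter $0 < r < 1$ and set, for each $n$,
\[
  g_n(\zeta) = \frac{\zeta + r w_n}{1 + r w_n \zeta}.
\]
The only possible pole of $g_n$ is at $-1/(r w_n)$, of modulus at least $1/r > 1$, so $g_n$ is analytic on the disc $\{|\zeta| < 1/r\}$, satisfies $|g_n(\zeta)| \le 1$ on $\ol{\bD}$ and $|g_n(\zeta)| = 1$ on $\bT$, and has $g_n(0) = r w_n$. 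Letting $T_r(\zeta)$ be the weighted shift with weights $(g_n(\zeta))_n$, the uniform bound $\sup_n |g_n(\zeta)| \le 1$ on $\ol{\bD}$ together with the entrywise analyticity makes $\zeta \mapsto T_r(\zeta)$ an analytic $\cB(\cH)$-valued contraction-valued function on $\{|\zeta| < 1/r\}$, with $T_r(0) = r T$.

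For $\zeta \in \bT$ all the weights $g_n(\zeta)$ are unimodular, and conjugating by the diagonal unitary with entries $d_0 = 1$, $d_{n+1} = d_n\, g_n(\zeta)$ turns $T_r(\zeta)$ into the unweighted unilateral shift $S$; since $\|p(S)\| = \sup_{\ol{\bD}} |p|$, each $T_r(\zeta)$ satisfies von Neumann's inequality for $\zeta \in \bT = \partial_0 \ol{\bD}$. Proposition~\ref{prop:shilov_von_neumann}(a) then yields von Neumann's inequality for $T_r(0) = rT$, that is, $\|p(rT)\| \le \sup_{\ol{\bD}} |p|$ for every polynomial $p$. Finally I would let $r \to 1^-$: from $\|rT - T\| = (1-r)\|T\| \to 0$ one gets $\|p(rT)\| \to \|p(T)\|$, and the claim follows.

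The step I expect to require the most care is the interplay between analyticity of the operator-valued family and the behaviour of the weights as they approach $1$. The parameter $r$ is precisely what keeps $\zeta \mapsto T_r(\zeta)$ analytic on a fixed neighbourhood of $\ol{\bD}$: without it, the poles $-1/w_n$ would accumulate at $-1 \in \bT$ whenever $\sup_n w_n = 1$, so that no single neighbourhood of $\ol{\bD}$ would serve and Proposition~\ref{prop:shilov_von_neumann} could not be invoked as stated. (Alternatively, one could dispense with $r$ and use the uniform-algebra extension noted in the remark after Proposition~\ref{prop:shilov_von_neumann} with the disc algebra $A(\ol{\bD})$, for which only continuity on $\ol{\bD}$ and analyticity on $\bD$ are needed; since the finitely supported vectors are dense and the family is uniformly bounded, each $\zeta \mapsto \langle p(T(\zeta)) f, g \rangle$ lies in $A(\ol{\bD})$.) The operator-valued analyticity itself is then routine, obtained by approximating arbitrary $f, g \in \cH$ by finitely supported vectors and using the uniform bound $\|p(T_r(\zeta))\| \le \sum |{\text{coefficients of }p}|$.
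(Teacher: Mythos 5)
Your proof is correct, but it implements the strategy differently from the paper. Both arguments rest on the same two pillars: Proposition \ref{prop:shilov_von_neumann}(a), and the observation that a shift whose weights are all unimodular is unitarily equivalent, via a diagonal unitary, to the unweighted shift, which satisfies von Neumann's inequality. The paper, however, first reduces to finite-dimensional \emph{truncated} shifts (the compression to $\cH_N$ and SOT-convergence argument recorded in Lemma \ref{lem:truncation}) and then takes the weights themselves as the analytic parameters: the map sending $z \in \ol{\bD}^{n-1}$ to the truncated shift with weight sequence $z_1,\ldots,z_{n-1}$ is entire, and $\partial_0(\ol{\bD}^{n-1}) = \bT^{n-1}$ consists exactly of unimodular weight sequences. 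You avoid truncation entirely, deforming all infinitely many weights at once through Blaschke factors $g_n(\zeta) = (\zeta + r w_n)/(1 + r w_n \zeta)$ of a single disc variable; the price is the regularization $r<1$ (needed, as you correctly note, because the poles $-1/w_n$ can accumulate at $-1 \in \bT$, so without it there is no common neighbourhood of $\ol{\bD}$ on which the operator-valued family is analytic) together with a final limit $r \to 1^-$. In short, the approximation happens in different places: the paper approximates the operator by finite compressions, while you approximate the weights by $r w_n$. One thing the paper's formulation buys that yours does not: truncation plus the weight-coordinate parametrization is precisely the setup that generalizes to the multivariable Theorem \ref{thm:main}, where the commutation relations \eqref{eqn:comm} couple the weights --- deforming each weight independently by its own Blaschke factor would violate \eqref{eqn:comm} for $d \ge 2$, which is why the paper's Lemma \ref{lem:Shilov_weights} instead scales only a carefully chosen set of weights by one common parameter. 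Your version, on the other hand, is self-contained in infinite dimensions, dispenses with the truncation lemma, and the $r$-regularization (or, alternatively, the disc-algebra variant you mention) is handled correctly.
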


  \begin{proof}
    A straightforward approximation argument reduces the statement to the case
    of truncated weighted shifts (see Lemma \ref{lem:truncation} below for the details).
    Let $n \in \bN$ and suppose that
    $T \in M_n(\bC)$ is a truncated weighted shift
    with weight sequence $w_1,\ldots,w_{n-1}$ in $\ol{\bD}$, that is,
    \begin{equation*}
      T =
      \begin{pmatrix}
        0 & 0 & \cdots & 0 & 0\\
        w_1 & 0 & \cdots & 0 & 0 \\
        0 & w_2 & \cdots & 0 & 0 \\
        \vdots & \vdots & \ddots & \vdots & \vdots \\
        0 & 0 & \ldots & w_{n-1} & 0
      \end{pmatrix}.
    \end{equation*}
    For $z = (z_1,\ldots,z_{n-1}) \in \ol{\bD}^{n-1}$, let $T(z) \in M_n$ be the truncated weighted
    shift with weight sequence $z_1,\ldots,z_{n-1}$. This defines an analytic map on $\ol{\bD}^{n-1}$.
    Since $\partial_0(\ol{\bD}^{n-1}) = \bT^{n-1}$, an application of Proposition
    \ref{prop:shilov_von_neumann} shows that it suffices to establish
    von Neumann's inequality for $T(z)$ if
    $z \in \bT^{n-1}$. However, for $z \in \bT^{n-1}$, the operator $T(z)$
    is easily seen to be unitarily equivalent to $T(1,1,\ldots,1)$ (cf. Corollary 1 in Section 2 of \cite{Shields74}),
    which evidently dilates to the bilateral shift, and thus satisfies von Neumann's inequality.
  \end{proof}

  \section{Preliminaries about weighted shifts}
  \label{sec:weighted_multshift}

  In this section, we review the definition and some basic properties of multivariable weighted shifts.
  For a comprehensive treatment, the reader is referred to \cite{JL79}. Let $d \in \bN$.
  We begin by recalling multi-index notation. A multi-index is an element $I \in \bN^d$. For
  $1 \le j \le d$, we write $\varepsilon_j$ for the multi-index $I = (i_1,\ldots,i_d)$ with
  $i_j = 1$ and $i_k =0$ for $k \neq j$. Given a multi-index $I = (i_1,\ldots,i_d)$, we define
  \begin{equation*}
    |I| = i_1 + \ldots + i_d.
  \end{equation*}
  Moreover, if $z = (z_1,\ldots,z_d) \in \bC^d$, we write
  \begin{equation*}
    z^I = z_1^{i_1} \ldots z_d^{i_d}.
  \end{equation*}
  If $T = (T_1,\ldots,T_d)$ is a commuting tuple of operators, we similarly define $T^I$.
  Given two multi-indices $I = (i_1,\ldots,i_d)$ and $J= (j_1,\ldots,j_d)$, we say that $I \le J$ if
  $i_k \le j_k$ for $1 \le k \le d$.

  Now, let $\cH$ be a Hilbert space with an orthonormal basis
  \begin{equation*}
    \{e_I: I \in \bN^d\}
  \end{equation*}
  and let
  \begin{equation*}
    \mathbf{w} = (w_{I,j})_{(I,j) \in \bN^d \times \{1,\ldots,d\}}
  \end{equation*}
  be a bounded collection of strictly positive numbers satisfying the commutation relations
  \begin{equation}
    \label{eqn:comm}
    w_{I,j} w_{I + \varepsilon_j, k} = w_{I,k} w_{I + \varepsilon_k,j}
  \end{equation}
  for all $I \in \bN^d$ and $j \in \{1,\ldots,d\}$.
  The \emph{($d$-variable) weighted shift} with weights $\mathbf w$
  is the unique $d$-tuple of bounded operators $(T_1,\ldots,T_d)$ on $\cH$ satisfying
  \begin{equation*}
    T_j e_I = w_{I,j} e_{I + \varepsilon_j} \quad (I \in \bN^d, j \in \{1,\ldots,d\}).
  \end{equation*}
  Observe that the relations \eqref{eqn:comm} guarantee that the operators $T_j$ commute.
  Evidently, $T_j$ is a contraction if and only if $w_{I,j} \le 1$ for all $I \in \bN^d$.

  \begin{rem}
    \begin{enumerate}[(a),wide,ref={\therem~(\alph*)}]
    \item
    \label{rem:H2_beta}
    The definition of multivariable weighted shifts in the introduction is equivalent to the definition given in this section.
    To see this, suppose that $M_z = (M_{z_1},\ldots,M_{z_d})$ is a tuple of multiplication operators on a space
    $H^2(\beta)$ as in the introduction. Then with respect to the orthonormal basis $(z^I / \beta_I)_{I \in \bN^d}$,
    the tuple $M_z$ is the $d$-variable weighted shift with weights
    \begin{equation*}
      w_{I,j} = \frac{\beta_{I + \varepsilon_j}}{\beta_I}.
    \end{equation*}
    Conversely, every $d$-variable weighted shift in the sense of this section is unitarily equivalent to the
    tuple $(M_{z_1},\ldots,M_{z_d})$ on $H^2(\beta)$, where
    \begin{equation*}
      \beta_I = ||T^I e_{(0,\ldots,0)}||
    \end{equation*}
    for $I \in \bN^d$, see \cite[Proposition 8]{JL79}. While the definition in terms of multiplication
    operators is somewhat cleaner, it is more convenient to work with the weights $\mathbf{w}$
    and not with the weights $\beta$ in the proof of Theorem \ref{thm:main}.
    Nevertheless, part of the proof is motivated by this other point of view.
    \item \label{rem:zero_weights} The assumption that all weights $w_{I,j}$ are strictly
      positive is standard in the study of multivariable weighted shifts. It is of course possible
      to define multivariable weighted shifts with complex weights in a similar way.
      According to \cite[Corollary 2]{JL79},
      every multivariable weighted shift with complex non-zero weights, equivalently, every injective
      multivariable weighted shift with complex weights, is unitarily equivalent to one with strictly positive weights.
      However, if we allow for zero weights, then the situation is quite different. Such a tuple is no longer unitarily
      equivalent to a tuple of the form $M_z$ on $H^2(\beta)$.
      We will see in Section 5 that the dilation part of Theorem \ref{thm:main} does not hold in this more general setting.
    \end{enumerate}
  \end{rem}

  Just as in the proof of Proposition \ref{prop:single_variable}, we will
  work with truncated shifts in the proof of Theorem \ref{thm:main}, and we will also need to consider complex and possibly
  zero weights.
  For $N \in \bN$, define
  a finite dimensional subspace $\cH_N$ of $\cH$ by
  \begin{equation}
    \label{eqn:H_N}
    \cH_N = \operatorname{span} \{e_I: |I| \le N \}.
  \end{equation}
  Suppose that
  \begin{equation*}
    \mathbf{w} = (w_{I,j})_{|I| \le N,  j \in \{1,\ldots,d\}}
  \end{equation*}
  is a collection of complex numbers satisfying the commutation relations \eqref{eqn:comm}
  for $|I| \le N-1$ and $j \in \{1,\ldots,d\}$. We call such a collection a \emph{commuting family}.
  The \emph{($d$-variable) truncated weighted shift} with weights $\mathbf{w}$
  is the unique $d$-tuple of operators $(T_1,\ldots,T_d)$ on $\cH_{N+1}$ satisfying
  \begin{equation*}
    T_j e_I =
    \begin{cases}
      w_{I,j} e_{I + \varepsilon_j} & \text{ if } |I| \le N, \\
      0 & \text{ if } |I| = N+1.
    \end{cases}
  \end{equation*}
  Once again, the commutation relations
  ensure that the operators $T_j$ commute.

  We require the following straightforward adaptation of \cite[Corollary 2]{JL79} to truncated weighted shifts.

  \begin{lem}
    \label{lem:weights_unit}
    Let $T$ be a $d$-variable truncated weighted shift on $\cH_{N+1}$ with non-zero weights $\mathbf{w} = (w_{I,j})$.
    Then $T$ is unitarily equivalent to the $d$-variable truncated weighted shift with weights $(|w_{I,j}|)$.
  \end{lem}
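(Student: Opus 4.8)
The plan is to produce a diagonal unitary $U$ on $\cH_{N+1}$ that conjugates the shift $T$ with weights $\mathbf w$ to the shift with weights $(|w_{I,j}|)$. Writing $U e_I = \lambda_I e_I$ with $|\lambda_I| = 1$, a direct computation gives
\[
(U T_j U^*) e_I = \overline{\lambda_I}\, \lambda_{I + \varepsilon_j}\, w_{I,j}\, e_{I + \varepsilon_j} \qquad (|I| \le N),
\]
so $U T_j U^*$ is the truncated weighted shift with weights $(|w_{I,j}|)$ precisely when
\[
\frac{\lambda_{I + \varepsilon_j}}{\lambda_I} = \frac{|w_{I,j}|}{w_{I,j}} \qquad (|I| \le N).
\]
Thus the task reduces to finding unimodular scalars $(\lambda_I)_{|I| \le N+1}$ satisfying this recursion; I may normalize by $\lambda_0 = 1$ for the zero multi-index $0 = (0,\ldots,0)$.

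I would then define $\lambda_I$ by induction on $|I|$, starting from $\lambda_0 = 1$ and setting $\lambda_{I + \varepsilon_j} = \lambda_I\, |w_{I,j}|/w_{I,j}$. Equivalently, $\lambda_I$ is the product of the unimodular factors $\mu_{J,k} := |w_{J,k}|/w_{J,k}$ taken along any monotone lattice path from $0$ to $I$ that increments one coordinate at each step. Since every weight $w_{J,k}$ appearing in the construction of $\lambda_I$ for $|I| \le N+1$ has $|J| \le N$, all of these factors are defined.

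The main point, and the only place where the hypotheses enter, is that this definition is independent of the chosen path, so that $\lambda_I$ is well-defined. Any two monotone paths from $0$ to a fixed $I$ are connected by a finite sequence of elementary moves, each interchanging two consecutive increments $\varepsilon_j$ and $\varepsilon_k$ across an elementary square with lower corner some $J$ (necessarily with $|J| \le N-1$). Invariance under such a move is exactly the identity
\[
\mu_{J,j}\, \mu_{J + \varepsilon_j, k} = \mu_{J,k}\, \mu_{J + \varepsilon_k, j},
\]
which holds because the commutation relation \eqref{eqn:comm} equates the denominators $w_{J,j} w_{J+\varepsilon_j,k}$ and $w_{J,k} w_{J+\varepsilon_k,j}$, while taking moduli in \eqref{eqn:comm} equates the numerators $|w_{J,j}||w_{J+\varepsilon_j,k}|$ and $|w_{J,k}||w_{J+\varepsilon_k,j}|$. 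Hence $\lambda_I$ is well-defined on all of $\{I : |I| \le N+1\}$.

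Finally I would observe that $U$ given by $U e_I = \lambda_I e_I$ is unitary, being diagonal with unimodular entries, and that by construction $U T_j U^*$ is the truncated weighted shift with weights $(|w_{I,j}|)$ for every $j$, which completes the proof. I expect the only genuine obstacle to be the path-independence of $\lambda$; this is precisely what the commutation relations \eqref{eqn:comm} guarantee, and everything else is a routine verification.
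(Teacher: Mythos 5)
Your proposal is correct and matches the paper's proof in essence: both conjugate $T$ by a diagonal unitary $U e_I = \lambda_I e_I$ with recursively defined unimodular phases, where well-definedness of the $\lambda_I$ follows from the commutation relations \eqref{eqn:comm} (the paper delegates this step to the argument of \cite[Corollary 2]{JL79}, while you spell it out via path-independence and elementary square moves, with the correct observation that the relevant squares have lower corner $J$ with $|J| \le N-1$). The only cosmetic difference is your phase convention $\lambda_{I+\varepsilon_j} = \lambda_I |w_{I,j}|/w_{I,j}$ versus the paper's $\lambda_I w_{I,j}/|w_{I,j}|$, which merely interchanges the roles of $U$ and $U^*$.
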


  \begin{proof}
    For $|I| \le N+1$, we define recursively complex
    numbers $\lambda_I$ of modulus $1$ by $\lambda_{(0,\ldots,0)} = 1$ and
    $\lambda_{I + \varepsilon_j} = \lambda_I w_{I,j}/ |w_{I,j}|$ for $|I| \le N$.
    As in the proof of \cite[Corollary 2]{JL79}, we deduce from the commutation relations \eqref{eqn:comm}
    that this is well defined. If $U$ is the unitary
    operator on $\cH_{N+1}$ satisfying $U e_I = \lambda_I e_I$,
    then $(U^* T_1 U, \ldots, U^* T_d U)$ is the $d$-variable truncated weighted shift
    with weights $(|w_{I,j}|)$.
  \end{proof}

  \section{Proof of Theorem \ref{thm:main}}
  \label{sec:proof}

  The proof of Theorem \ref{thm:main} is essentially an adaptation
  of the proof of Proposition \ref{prop:single_variable} to the multivariate
  setting.
  We begin with a straightforward reduction to truncated shifts.

  \begin{lem}
    \label{lem:truncation}
    In order to prove Theorem \ref{thm:main}, it suffices to show that
    every $d$-variable truncated weighted shift with weights in $(0,1]$
    dilates to a $d$-tuple of commuting unitaries.
  \end{lem}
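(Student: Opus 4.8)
The plan is to route the entire argument through the matrix version of von Neumann's inequality, exploiting the equivalence recorded in the proof of Proposition \ref{prop:shilov_von_neumann}: a tuple of commuting contractions dilates to a tuple of commuting unitaries if and only if it satisfies the matrix version of von Neumann's inequality. Under this equivalence, the hypothesis of the lemma is precisely that every $d$-variable truncated weighted shift with weights in $(0,1]$ satisfies the matrix version of von Neumann's inequality, while Theorem \ref{thm:main} becomes the assertion that every contractive $d$-variable weighted shift does. Since the weights $w_{I,j}$ of a contractive weighted shift are strictly positive with $w_{I,j} \le 1$, they automatically lie in $(0,1]$, so the weight range causes no difficulty; the only thing to overcome is the passage from the finite truncations to the full shift.

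First I would fix an $n \times n$ matrix of polynomials $p \in M_n(\bC[z_1,\ldots,z_d])$, say of total degree $m$, and aim to show $||p(T)||_{\cB(\cH^n)} \le ||p||_\infty$. The key device is the scale of finite-dimensional subspaces $\cH_N$ from \eqref{eqn:H_N}. Writing $P_N$ for the orthogonal projection of $\cH$ onto $\cH_N$, a direct computation shows that the compression $T^{(N)}_j = P_{N+1} T_j|_{\cH_{N+1}}$ is exactly the $d$-variable truncated weighted shift on $\cH_{N+1}$ with weights $(w_{I,j})_{|I| \le N}$, and these weights again lie in $(0,1]$. By hypothesis, the truncated tuple $T^{(N)} = (T^{(N)}_1,\ldots,T^{(N)}_d)$ therefore satisfies the matrix version of von Neumann's inequality.

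The central observation is a degree-tracking identity. Since each $T_j$ raises the degree $|I|$ by one, any word in $T_1,\ldots,T_d$ of length at most $m$ sends a vector supported on degrees $\le N+1-m$ to one supported on degrees $\le N+1$, and every intermediate vector stays in $\cH_{N+1}$; on such vectors each $T_j$ agrees with its compression $T^{(N)}_j$. Consequently $p(T)\xi = p(T^{(N)})\xi$ for every $\xi \in \cH_{N+1-m}^n$, where on the right $\xi$ is regarded as an element of $\cH_{N+1}^n$. Hence for unit vectors $\xi, \eta \in \cH_{N+1-m}^n$,
\begin{equation*}
  |\langle p(T)\xi, \eta \rangle| = |\langle p(T^{(N)})\xi, \eta\rangle| \le ||p(T^{(N)})|| \le ||p||_\infty .
\end{equation*}
Because $\bigcup_N \cH_{N+1-m}^n$ is dense in $\cH^n$, letting $N \to \infty$ yields $||p(T)||_{\cB(\cH^n)} \le ||p||_\infty$. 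As $p$ was arbitrary, $T$ satisfies the matrix version of von Neumann's inequality, and invoking the equivalence once more produces the desired unitary dilation of $T$, completing the reduction.

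The work is essentially bookkeeping, and the main (minor) obstacle is to verify the two endpoints cleanly rather than to find a deep idea: one must check that the compression $T^{(N)}$ really is the truncated weighted shift with the inherited weights, and that the degree count leaves no room for a boundary contribution at degree $N+1$. The genuinely essential point is conceptual, namely to argue through the matrix version of von Neumann's inequality rather than to manipulate dilations directly, since finite-dimensional dilations do not pass to limits in any obvious way, whereas the scalar inequalities $|\langle p(T)\xi,\eta\rangle| \le ||p||_\infty$ do.
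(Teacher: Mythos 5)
Your proof is correct and takes essentially the same approach as the paper: both reduce Theorem \ref{thm:main} to the matrix version of von Neumann's inequality for the compressions of $T$ to the finite-dimensional subspaces $\cH_N$, which are truncated weighted shifts with weights in $(0,1]$, and then invoke the equivalence (via Arveson's dilation theorem) between that inequality and dilation to commuting unitaries. The only difference is bookkeeping: where you track degrees to obtain the exact identity $p(T)\xi = p(T^{(N)})\xi$ for low-degree vectors $\xi$, the paper uses co-invariance of $\cH_N$ under each $T_j$ to get $p(P_{\cH_N} T|_{\cH_N}) = P_{\cH_N} p(T)|_{\cH_N}$ and then passes to the limit in the strong operator topology.
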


  \begin{proof}
    Let
    $T = (T_1,\ldots,T_d)$ be a $d$-variable weighted shift on $\cH$
    such that $||T_j|| \le 1$ for each $j$, that is, all weights of $T$ belong to $(0,1]$.
    Let $\cH_N$
    be the subspace of $\cH$ defined in \eqref{eqn:H_N}.
    Observe that the compressed tuple
    \begin{equation*}
      P_{\cH_N} T \big|_{\cH_N} = (P_{\cH_N} T_1 \big|_{\cH_N},\ldots, P_{\cH_N} T_d \big|_{\cH_N})
    \end{equation*}
    is a $d$-variable truncated weighted shift with weights in $(0,1]$. Since $\cH_N$ is co-invariant under each $T_j$,
    we see that
    \begin{equation*}
      p(P_{\cH_N} T \big|_{\cH_N}) = P_{\cH_N} p(T) \big|_{\cH_N}
    \end{equation*}
    holds for every $p \in \bC[z_1,\ldots,z_d]$. Hence, for every $p \in \bC[z_1,\ldots,z_d]$,
    the sequence  $p(P_{\cH_N} T \big|_{\cH_N})$ converges to $p(T)$ in the strong operator
    topology as $n \to \infty$.
    Consequently, if $P_{\cH_N} T \big|_{\cH_N}$ dilates to a $d$-tuple of commuting unitaries and thus
    satisfies the matrix version of von Neumann's inequality for all $N \in \bN$, then
    $T$ satisfies the matrix version of von Neumann's inequality, and therefore dilates to
    a $d$-tuple of commuting unitaries.
  \end{proof}

  The main obstacle when generalizing the proof of Proposition \ref{prop:single_variable} to multivariable shifts
  is that multivariable truncated weighted shifts are not
  parametrized by the points of a polydisc in an obvious way. This is because of
  the commutation relations \eqref{eqn:comm}.  Instead, we will use Lemma \ref{lem:Shilov_weights} below.

  For the remainder of this section, let us fix $N \in \bN$ and set
  \begin{equation*}
    \cI = \{(I,j) \in \bN^d \times \{1,\ldots,d\}: |I| \le N\}.
  \end{equation*}
  Let
  $X$ denote the closure of the set of all commuting families $(w_{I,j})_{(I,j) \in \cI}$ with
  $0 < |w_{I,j}| \le 1$ for all $(I,j) \in \cI$.
  Observe that we may regard $X$ as a compact subset of $\bC^{|\cI|}$.

  \begin{lem}
    \label{lem:Shilov_weights}
    The Shilov boundary of $X$ is contained in the set
    \begin{equation*}
      X_0 = \{(w_{I,j}) \in X : |w_{I ,j}| =1 \text{ for all } (I,j) \in \cI \}.
    \end{equation*}
  \end{lem}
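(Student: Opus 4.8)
The plan is to show that $X_0$ is itself a \emph{boundary}, i.e.\ that $\sup_X |f| = \sup_{X_0} |f|$ for every analytic function $f$ on $X$. Since $X_0$ is compact and $\partial_0 X$ is the smallest closed boundary, this immediately gives $\partial_0 X \subseteq X_0$. The inequality $\sup_{X_0}|f| \le \sup_X|f|$ is trivial as $X_0 \subseteq X$, so the content is the reverse inequality. Because $X$ is by definition the closure of the set $X'$ of commuting families with $0 < |w_{I,j}| \le 1$ and $f$ is continuous, it suffices to prove $|f(w)| \le \sup_{X_0}|f|$ for every $w \in X'$. The idea is to pass an analytic disc through $w$ whose boundary lands almost entirely in $X_0$ and then invoke the maximum modulus principle.

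Fix $w = (w_{I,j}) \in X'$. First I remove the phases: choosing unimodular numbers $\mu_I$ via the potential construction in the proof of Lemma~\ref{lem:weights_unit}, the diagonal map $\Phi_\mu \colon (v_{I,j}) \mapsto (\mu_{I+\varepsilon_j}\overline{\mu}_I \, v_{I,j})$ is a linear automorphism of $\bC^{|\cI|}$ that preserves the commutation relations \eqref{eqn:comm} and all moduli, hence restricts to a bijection of $X$ onto itself carrying $X_0$ onto $X_0$, and it sends $w$ to the positive family $r = (|w_{I,j}|)$. Replacing $f$ by $f \circ \Phi_\mu^{-1}$, which is again analytic and has the same suprema over $X$ and over $X_0$, I may assume $w = r$ has weights $r_{I,j} \in (0,1]$. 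Writing $r_{I,j} = e^{-a_{I,j}}$ with $a_{I,j} \ge 0$, the logarithm of \eqref{eqn:comm} becomes the exact additivity $a_{I,j} + a_{I+\varepsilon_j,k} = a_{I,k} + a_{I+\varepsilon_k,j}$. Let $g(\zeta) = \frac{1+\zeta}{1-\zeta}$ be the Cayley transform, so that $g$ maps $\bD$ into the right half-plane with $g(0) = 1$ and $\operatorname{Re} g(\zeta) = 0$ for $\zeta \in \bT \setminus \{1\}$, and define $\phi(\zeta)_{I,j} = r_{I,j}^{\,g(\zeta)} = e^{-a_{I,j} g(\zeta)}$.

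By additivity each $\phi(\zeta)$ satisfies \eqref{eqn:comm}, and since $\operatorname{Re} g \ge 0$ on $\bD$ we have $|\phi(\zeta)_{I,j}| = r_{I,j}^{\operatorname{Re} g(\zeta)} \le 1$; thus $\phi(\bD) \subseteq X' \subseteq X$, the map $\phi$ extends continuously to $\overline{\bD}$ with image in $X$, and $\phi(0) = r$. The key point is that $|\phi(\zeta)_{I,j}| = 1$ for $\zeta \in \bT \setminus \{1\}$, so $\phi(\bT \setminus \{1\}) \subseteq X_0$, and hence $\phi(\bT) \subseteq X_0$ since $X_0$ is closed. Applying the maximum modulus principle to the analytic function $\zeta \mapsto f(\phi(\zeta))$ on $\bD$ then yields $|f(r)| = |f(\phi(0))| \le \sup_{\zeta \in \bT} |f(\phi(\zeta))| \le \sup_{X_0}|f|$, which is the desired estimate. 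The main obstacle is precisely the construction of this disc: the commutation relations forbid varying the weights independently, and a naive attempt to push an interior point's moduli to $1$ over the entire boundary circle is blocked by the mean value property. The resolution is to exploit that the relations are linear in $\log w_{I,j}$, so the ansatz $r_{I,j}^{\,g(\zeta)}$ is automatically a commuting family, and to use the Cayley transform to concentrate the unavoidable defect at the single point $\zeta = 1$ while sending every other boundary point into $X_0$; the preliminary removal of phases by $\Phi_\mu$ is what makes the modulus computation on $\bT$ transparent.
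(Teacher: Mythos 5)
Most of your argument is sound, and it takes a genuinely different route from the paper: the reduction to the dense set $X'$ of commuting families with all moduli in $(0,1]$, the phase-stripping map $\Phi_\mu$ (the multipliers $\mu_{I+\varepsilon_j}\overline{\mu}_I$ telescope on both sides of \eqref{eqn:comm}, so $\Phi_\mu$ indeed maps $X$ onto $X$ and $X_0$ onto $X_0$ and carries $w$ to $(|w_{I,j}|)$), and the observation that \eqref{eqn:comm} is linear in $\log w_{I,j}$, so that $\phi(\zeta)=(r_{I,j}^{\,g(\zeta)})$ is automatically a commuting family lying in $X'$ for every $\zeta\in\bD$, are all correct.

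However, there is a genuine gap at the crucial last step. The claim that ``$\phi$ extends continuously to $\overline{\bD}$'' is false whenever some $r_{I,j}<1$, which is exactly the case of interest. Indeed, $g$ has a pole at $\zeta=1$: along the radius, $g(t)\to+\infty$, so $\phi(t)_{I,j}\to 0$ for every pair with $a_{I,j}>0$, whereas $|\phi(\xi)_{I,j}|=1$ for $\xi\in\bT\setminus\{1\}$. So $\phi$ has no continuous extension at $\zeta=1$; worse, its radial limit there lies in $X\setminus X_0$ (some entries are $0$), so the exceptional point cannot be absorbed into $X_0$ ``by closedness'' as you assert. Consequently $h=f\circ\phi$ need not be continuous on $\overline{\bD}$, and the plain maximum modulus principle, in the form $|h(0)|\le\sup_{\bT}|h|$, does not apply. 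The gap is fixable by a standard Phragm\'en--Lindel\"of argument, because a single exceptional boundary point is negligible for \emph{bounded} analytic functions: $h$ is bounded on $\bD$ by $\sup_X|f|$ (since $\phi(\bD)\subseteq X$), and $\limsup_{\zeta\to\xi}|h(\zeta)|\le\sup_{X_0}|f|$ for every $\xi\in\bT\setminus\{1\}$; applying the maximum principle to $h(\zeta)\bigl((1-\zeta)/2\bigr)^{\varepsilon}$ and letting $\varepsilon\to 0$ gives $|f(r)|=|h(0)|\le\sup_{X_0}|f|$. With this lemma inserted, your proof is complete. Note that the paper's proof avoids the issue altogether: it deforms only the ``scalable'' weights, linearly in a parameter $t$ ranging over a closed disc, so the deformation is polynomial in $t$ and continuous up to the boundary, and the ordinary maximum modulus principle applies; the price is an iteration (finitely many passes, each strictly enlarging the set of unimodular weights), while your single disc tries to do everything in one stroke and pays with the singularity at $\zeta=1$.
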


  \begin{proof}
    Let $\mathbf{w} = (w_{I,j}) \in X \setminus X_0$ with $w_{I,j} \neq 0$ for all $(I,j) \in \cI$
    and let $f: X \to \bC$ be a function which extends to
    be analytic in a neighbourhood of $X$. We will show that there exists a point
    $\mathbf{\widetilde w} = (\widetilde w_{I,j}) \in X$ with $\widetilde w_{I,j} \neq 0$ for all $(I,j) \in \cI$ such that
    \begin{equation*}
      |f(\mathbf{w})| \le |f(\mathbf{\widetilde w})|
    \end{equation*}
    and such that
    \begin{equation*}
      \{ (I,j) \in \cI: |w_{I,j}| = 1 \} \subsetneq
      \{ (I,j) \in \cI: |\widetilde w_{I,j}| = 1 \}.
    \end{equation*}
    Once this has been accomplished, iterating this process finitely many times yields
    a point $\mathbf{v} \in X_0$ such that $|f(\mathbf w)| \le |f(\mathbf{v})|$.
    Consequently, $X_0$ is a boundary for the algebra of all analytic functions on $X$, so $\partial_0 X \subset X_0$.

    Let us begin by establishing some terminology which will
    be used throughout the proof. Let $T = (T_1,\ldots,T_d)$ be the $d$-variable truncated weighted shift
    with weights $\mathbf{w}$. If $I \subset \bN^d$ is a multi-index with $|I| \le N+1$,
    we say that $I$ is \emph{good} if
    \begin{equation*}
      ||T^I e_{(0,\ldots,0)} || = 1.
    \end{equation*}
    Otherwise, we call $I$ \emph{bad} (cf. Remark \ref{rem:H2_beta}).
    The following observations are immediate:
    \begin{enumerate}[(a)]
      \item If $I$ is good and if $J \le I$, then $J$ is good.
      \item If $(I,j) \in \cI$ with $|w_{I,j}| < 1$, then $I+\varepsilon_j$ is bad.
      \item Suppose that $|I| \le N$. If $I$ is good and $I + \varepsilon_j$ is bad,
        then $|w_{I,j}| < 1$.
    \end{enumerate}
    We say that a pair $(I,j) \in \cI$ is \emph{scalable} if $I$ is good, but $I + \varepsilon_j$ is bad.
    It follows from (b) and the choice of $\mathbf{w}$ that there exists at least one bad
    multi-index. Since $(0,\ldots,0)$ is good, we therefore see that
    there exists at least one scalable pair.
    Recall that all $|w_{I,j}|$ are assumed to be non-zero, so we may define
    \begin{equation*}
      r = \max \{|w_{I,j}|: (I,j) \text{ is scalable} \}^{-1}.
    \end{equation*}
    Then $r  > 1$ by (c).
    Let $\ol{D_r(0)} \subset \bC$ denote the closed disc of radius $r$ around $0$.
    For $t \in \ol{D_r(0)}$ and $(I,j) \in \cI$, define
    \begin{equation*}
      \widehat{w}_{I,j} (t) =
      \begin{cases}
        t w_{I,j} & \text{ if } (I,j) \text{ is scalable}, \\
        w_{I,j} & \text{ if } (I,j) \text{ otherwise}
      \end{cases}
    \end{equation*}
    and let $\mathbf{\widehat w}(t) = (\widehat w_{I, j}(t))_{(I,j) \in \cI}$.
    We finish the proof by showing that $\mathbf{\widehat w}(t) \in X$ for every $t \in \ol{D_r(0)}$.
    Indeed, it then follows from
    the maximum modulus principle that there exists $t_0 \in \partial D_r(0)$ with
    \begin{equation*}
      |f(\mathbf{w})| = |f(\mathbf {\widehat w}(1))| \le
      |f(\mathbf {\widehat w}(t_0))|,
    \end{equation*}
    so setting $\mathbf{\widetilde w} = \mathbf{\widehat w}(t_0)$, we obtain a point with
    the desired properties.

    Since $X$ is closed, it suffices to show that $\mathbf{\widehat w}(t) \in X$ for all $t \in \ol{D_r(0)} \setminus \{0\}$.
    Clearly, $0 < |\widehat w_{I,j}(t)| \le 1$ for these $t$, so we need to show that $\mathbf{\widehat w}(t)$
    is a commuting family, that is, we need to show that
    \begin{equation*}
      \widehat w_{I,j}(t) \widehat w_{I + \varepsilon_j,k}(t) =
      \widehat w_{I,k}(t) \widehat w_{I + \varepsilon_k,j}(t)
    \end{equation*}
    for all $t \in \ol{D_r(0)}$ and
    all multi-indices $I$ with $|I| \le N-1$ and $1 \le j,k \le d$.
    Let $I$ be such a multi-index.
    If $I$ is bad, it follows from (a) that $I + \varepsilon_j$ and $I + \varepsilon_k$ are bad as well,
    and hence no pairs in $\cI$ which appear in the above equation are scalable. If $I$ and $I + \varepsilon_j + \varepsilon_k$
    are good, then it follows again from (a) that no pairs in the equation are scalable. Thus, it remains
    to consider the case where $I$ is good and $I + \varepsilon_j + \varepsilon_k$ is bad. In this
    case, exactly one of $(I,j)$ and $(I + \varepsilon_j, k)$ is scalable, depending on whether $I + \varepsilon_j$ is good.
    Similarly, exactly one of $(I,k)$ and $(I+\varepsilon_k,j)$ is scalable. Thus
    \begin{equation*}
      \widehat w_{I,j} (t) \widehat w_{I + \varepsilon_j,k} (t)=
      t w_{I,j}  w_{I + \varepsilon_j,k} =
      t w_{I,k} w_{I + \varepsilon_k,j} =
      \widehat w_{I,k}(t) \widehat w_{I + \varepsilon_k,j} (t),
    \end{equation*}
    as asserted.
  \end{proof}

  We are now ready to prove the main theorem.
  \begin{proof}[Proof of Theorem \ref{thm:main}]
    According to Lemma \ref{lem:truncation}, it is enough to establish Theorem \ref{thm:main}
    when $T$ is a $d$-variable truncated weighted shift with weights in $(0,1]$. Assume that
    $T$ acts on $\cH_{N+1}$.
    Given a commuting family $\mathbf w \in X$, let us write $T(\mathbf w)$ for the $d$-variable truncated weighted shift on $\cH_{N+1}$
    with weights $\mathbf w$. Then the range of the analytic map
    \begin{equation*}
      X \to \cB(\cH_{N+1})^d, \quad \mathbf w \mapsto T(\mathbf w),
    \end{equation*}
    consists of $d$-tuples of commuting contractions and contains every
  $d$-variable truncated weighted shift on $\cH_{N+1}$ with weights in $(0,1]$.
    According to Proposition \ref{prop:shilov_von_neumann} and Lemma \ref{lem:Shilov_weights},
    it therefore suffices to show that $T(\mathbf w)$ dilates to a $d$-tuple of commuting unitaries
    if $\mathbf{w} \in X_0$. We infer from Lemma \ref{lem:weights_unit} that for $\mathbf{w} \in X_0$, the $d$-tuple
    $T(\mathbf{w})$ is unitarily equivalent to the tuple $T(\mathbf{1})$, where $\mathbf{1}$
    denotes the element of $X_0$ whose entries are all equal to $1$. Thus, it remains to show that $T(\mathbf{1})$
    dilates to a $d$-tuple of commuting unitaries. In this case, it is not hard to construct
    a unitary dilation explicitly.

    Indeed, let $\sigma$ denote the normalized Lebesgue measure
    on $\bT^d$, and for $1 \le k \le d$, let $M_{z_k}$ be the operator
    on $L^2(\sigma)$ given by multiplication with $z_k$. Then $M_z = (M_{z_1},\ldots,M_{z_d})$ is a $d$-tuple
    of commuting unitaries, and $T(\mathbf{1})$ can be identified with the compression
    of $M_z$ to the semi-invariant subspace
    \begin{equation*}
      \operatorname{span} \{z^I: I \in \bN^d, |I| \le N \},
    \end{equation*}
    cf. Example 1 in Section 2 of \cite{JL79}. Therefore, the proof is complete.
  \end{proof}

  \begin{rem}
    In the last proof, the tuple of unitaries $(M_{z_1}^*, \ldots, M_{z_d}^*)$ on $L^2(\bT^d)$ is in fact a regular
    dilation of the adjoint of $T(\mathbf{1})$ in the sense of \cite[Section 9]{SFB+10}. Therefore, the adjoint of
    every tuple $T(\mathbf{w})$ for $\mathbf{w} \in \partial_0 X$ admits a regular unitary dilation.
    This is not true for the adjoint of every tuple $T(\mathbf{w})$ for $\mathbf w \in X$.

    For example,
    suppose that $d = 2$ and let
    \begin{equation*}
      w_{I,j} =
      \begin{cases}
        \frac{1}{2}, & \text{ if } I = (0,0) \\
        1, & \text{ otherwise.}
      \end{cases}
    \end{equation*}
    Let $T = (T_1,T_2)$ be the $2$-variable weighted shift with weights $(w_{I,j})$. With notation
    as in Remark \ref{rem:H2_beta}, $T$ is unitarily equivalent to
    $(M_{z_1},M_{z_2})$ on $H^2(\beta)$, where $\beta_0 = 1$ and $\beta_{I} = \frac{1}{2}$
    if $I \neq (0,0)$.
    A straightforward computation shows that
    \begin{equation*}
      (1 - T_1 T_1^* - T_2 T_2^* + T_1 T_2 T_1^* T_2^*) e_{(1,1)} = - \frac{3}{4 }e_{(1,1)},
    \end{equation*}
    hence $T^*$ does not admit a regular unitary dilation by \cite[Theorem 9.1]{SFB+10}.
    Similarly, the truncations $P_{\cH_N} T^* \big|_{\cH_N}$ do not admit regular
    unitary dilations if $N \ge 2$.

    For the same reason, multivariable truncated weighted shifts do not in general coextend to a (direct sum of) $M_z$ on
    the Hardy space $H^2(\bD^d)$, or, more generally, to a tuple $(V_1,\ldots,V_d)$ of doubly
    commuting isometries (i.e. the $V_i$ commute and $V_i^* V_j = V_j V_i^*$ if $i \neq j$). Indeed,
    if $(V_1,V_2)$ is a pair of doubly commuting isometries, then
    \begin{equation*}
      (1 - V_1 V_1^* - V_2 V_2^* + V_1 V_2 V_1^* V_2^*) = (1 - V_1 V_1^*) (1 - V_2 V_2^*)
    \end{equation*}
    is a positive operator, and hence if $(T_1,T_2)$ is a compression
    of $(V_1,V_2)$ to a co-invariant subspace, then
    \begin{equation*}
      1 -T_1 T_1^* - T_2 T_2^* + T_1 T_2 T_1^* T_2^*
    \end{equation*}
    is a positive operator as well. On the other hand, if $d = 1$, then every contractive weighted
    shift, being a pure contraction, coextends to a direct sum of copies of the unilateral shift $M_z$ on $H^2(\bD)$.

  \end{rem}

  \section{A non-injective counterexample}
  \label{sec:counterexample}
  The definiton of multivariable weighted shifts given in Section \ref{sec:weighted_multshift} can be generalized
  to allow complex and possibly zero weights, see Remark \ref{rem:zero_weights}.
  Even though it is customary to assume that all weights are non-zero, we may still ask if Theorem \ref{thm:main} holds
  in this greater generality. Observe that the proof of Theorem \ref{thm:main}
  breaks down if some of the weights of $T$ are zero. Indeed,
  the method of scaling certain weights by a complex number $t$ never changes a zero weight into
  a non-zero one. It is natural to ask, however, if the proof can be modified by introducing
  non-zero weights in such a way that the commutation relations \eqref{eqn:comm} still hold.
  We will now exhibit an example which shows that this is not possible in general. In fact,
  the operator tuple in this example does not dilate to a commuting tuple of unitaries.

  Let $T$ be a $3$-variable weighted shift with not necessarily positive weights $(w_{I,j})$ given by
  \begin{equation*}
    w_{I,j} =
    \begin{cases}
      0, & \text{ if } |I| \ge 2 \text{ or } I = \varepsilon_j, \\
      a_{i,j}, & \text{ if } I = \varepsilon_i \text{ and } i \neq j, \\
      \delta_j & \text{ if } I = (0,0,0).
    \end{cases}
  \end{equation*}
  Here $(a_{i, j})_{i \neq j}$ are six complex numbers of modulus $1$ and $(\delta_j)_{1 \le j \le 3}$ are three
  complex numbers of modulus at most $1$, all to be determined later.
  The relevant part of the three dimensional grid $\bN^3$ together with the weights
  $w_{I,j}$ is shown below.

  \begin{figure}[h]
  \begin{tikzcd}[back line/.style={densely dotted}, row sep=3em, column sep=3em]
    & (0,1,1) \ar{rr}{0} \ar[back line,leftarrow]{dd}[near end,swap]{a_{2,3}} 
  & & (1,1,1) \\
  (0,0,1) \ar{ur}[sloped, near end]{a_{3,2}}  \ar[crossing over]{rr}[near end]{a_{3,1}} 
  & & (1,0,1) \ar{ur}[sloped]{0}  \\
  & (0,1,0) \ar[back line]{rr}[near start]{a_{2,1}}
  & & (1,1,0) \ar{uu}[swap]{0}  \\
  (0,0,0) \ar{rr}{\delta_1} \ar[back line]{ur}[sloped,near end]{\delta_2} \ar{uu}{\delta_3} & & (1,0,0) \ar{ur}[sloped,near end]{a_{1,2}} \ar[crossing over]{uu}[swap,near end]{a_{1,3}}
\end{tikzcd}
\end{figure}

  Observe that if $\delta_j = 0$ for all $j$, then $(w_{I,j})$ satisfies the commutation relations \eqref{eqn:comm} regardless
  of the value of the six weights $a_{i,j}$. On the other hand, the relations
  \begin{align*}
    \delta_1 a_{1,3} &= \delta_3 a_{3,1} \\
    \delta_2 a_{2,1} &= \delta_1 a_{1,2} \\
    \delta_3 a_{3,2} &=  \delta_2 a_{2,3}
  \end{align*}
  show that if one of the $\delta_j$ is not zero, then all of them are non-zero. Moreover,
  multiplying the above equations, we see that in this case, the equation
  \begin{equation*}
    a_{1,3} a_{2,1} a_{3,2} = a_{3,1} a_{1,2} a_{2,3}
  \end{equation*}
  must hold. For example, let us set $a_{2,1} = -1$ and
  \begin{equation*}
    a_{1,2} = a_{1,3} = a_{2,3} = a_{3,1} =  a_{3,2} = 1.
  \end{equation*}
  If $\delta_j = 0$ for $j = 1,2,3$, then we obtain a $3$-variable contractive weighted shift $T$ with not necessarily positive weights.
  However, it is not possible to perturb the first three weights $w_{(0,0,0),j} = \delta_j$ while maintaining
  commutativity of the operators. Note this also shows that for any $N \ge 2$, the weights $(w_{I,j})$, where $|I| \le N$,
  do not belong to the set $X$ of Section \ref{sec:proof}.

  We now show that the $3$-tuple of commuting contractions $T$ which we just constructed does not
  dilate to a $3$-tuple of commuting unitaries. This is very similar to Parrott's example \cite{Parrott70}.
  Observe that the $6$-dimensional space $M$ spanned by the vectors
  \begin{equation*}
    e_{(1,0,0)}, e_{(0,1,0)},e_{(0,0,1)}, e_{(1,1,0)},e_{(1,0,1)},e_{(0,1,1)}
  \end{equation*}
  contains $\ran(T_j)$ as well as $\ker(T_j)^\bot$ for $j=1,2,3$, so we may restrict
  our attention to this space. With respect to the orthonormal basis above, the operators $T_j$
  are given on $M$ by
  \begin{equation*}
    T_j =
    \begin{bmatrix}
      0 & 0 \\ A_j & 0
    \end{bmatrix} \in M_6(\bC) \quad (j = 1,2,3),
  \end{equation*}
  where
  \begin{equation*}
    A_1 =
    \begin{bmatrix}
      0  & -1 & 0 \\ 0 & 0 & 1 \\ 0 & 0 & 0
    \end{bmatrix}, \quad
    A_2 =
    \begin{bmatrix}
      1  & 0 & 0 \\ 0 & 0 & 0 \\ 0 & 0 & 1
    \end{bmatrix}, \quad
    A_3 =
    \begin{bmatrix}
      0  & 0 & 0 \\ 1 & 0 & 0 \\ 0 & 1 & 0
    \end{bmatrix}.
  \end{equation*}
  As in the treatment of Parrott's example in \cite[Example 20.27]{Davidson88}, we consider the matrix polynomial
  \begin{equation*}
    p(z_1,z_2,z_3) =
    \begin{bmatrix}
      z_1 & z_2 & 0 \\ z_3 & 0 & z_2 \\ 0 & z_3 & -z_1
    \end{bmatrix}.
  \end{equation*}
  It is shown there that
  \begin{equation*}
    \sup \{ ||p(z_1,z_2,z_3)|| : z_1,z_2,z_3 \in \ol{\bD} \} = \sqrt{3}.
  \end{equation*}
  On the other hand,
  \begin{equation*}
    ||p(T_1,T_2,T_3)|| = ||p(A_1,A_2,A_3)||.
  \end{equation*}
  The submatrix of the $9 \times 9$ matrix $p(A_1,A_2,A_3)$ corresponding to
  the rows $1,6,8$ and the columns $2,4,9$ is
  \begin{equation*}
    \begin{bmatrix}
      -1 & 1 & 0 \\ 1 & 0 & 1 \\ 0 & 1 & -1
    \end{bmatrix},
  \end{equation*}
  and it is easy to check that this matrix has norm $2$. Thus,
  \begin{equation*}
    ||p(T_1,T_2,T_3)|| \ge 2.
  \end{equation*}
  In fact, it is not hard to see that $||p(T_1,T_2,T_3)|| = 2$. Since $2 > \sqrt{3}$,
  it follows that the commuting contractions $T_1,T_2,T_3$ do not dilate to commuting unitaries.
  However, it follows from Section 5 of \cite{Parrott70} that $T_1,T_2,T_3$
  do satisfy the scalar version of von Neumann's
  inequality.

  This example also demonstrates that Lemma \ref{lem:weights_unit} and \cite[Corollary 6]{JL79}
  do not hold in general without the assumption that the weights are non-zero. Indeed, it is not hard
  to see that if $a_{i,j} = 1$ for all $i \neq j$ in the example above, then $T$ does dilate to a commuting tuple
  of unitaries.

\begin{ack}
  The author would like to thank his advisor, Ken Davidson, for his advice and support.
\end{ack}

\bibliographystyle{amsplain}
\bibliography{literature}

\end{document}